\newcommand{\sect}[1]{\section{#1}\setcounter{equation}{0}}
\newcommand{\subsect}[1]{\subsection{#1}}
\font\mbn=msbm10 scaled \magstep1
\font\mbs=msbm7 scaled \magstep1
\font\mbss=msbm5 scaled \magstep1
\newcommand{\N}       { \mathbb{N}}
\newcommand{\Z}        {\mathbb{Z}  }  
\newcommand\Co           {{\mathbb C}}
\newtheorem{Th}{Theorem}[section]
\newtheorem{Lm}[Th]{Lemma}
\newtheorem{Prop}[Th]{Proposition}
\newtheorem{E}[Th]{Example}
\newtheorem*{Lemma A}{Lemma A}
\newtheorem*{Lemma B}{Lemma B}
\newtheorem*{Lemma C}{Lemma C}
\newtheorem*{Th A}{Theorem A}
\newtheorem*{Th B}{Theorem B}
\begin{document}

\title[Projective Freeness and Stable Rank of Algebras of BV Functions]{Projective Freeness and Stable Rank of Algebras of Complex-valued BV Functions}
\author{Alexander Brudnyi}
\address{Department of Mathematics and Statistics\newline
\hspace*{1em} University of Calgary\newline
\hspace*{1em} Calgary, Alberta, Canada\newline
\hspace*{1em} T2N 1N4}
\email{abrudnyi@ucalgary.ca}

\keywords{Function of bounded variation, projective module, idempotent, stable rank, Hausdorff measure, continuum, \v{C}ech cohomology, covering dimension, polynomially convex hull}
\subjclass[2020]{Primary 46J10; Secondary 46M10, 13C10.}

\thanks{Research is supported in part by NSERC}

\begin{abstract} 
The paper investigates the algebraic properties of 
Banach  algebras of complex-valued functions of bounded variation on a finite interval. It is proved that such algebras have Bass stable rank one
 and are projective free if they do not contain nontrivial idempotents.  
 These properties are derived from a new result on the vanishing of the second \v{C}ech cohomology group of the polynomially convex hull of a continuum of a finite linear measure.
 \end{abstract}

\date{}

\maketitle

\sect{Formulation of Main Results}
\subsect{} Let $BV(I)$ be the space of complex-valued functions of bounded variation on the interval $I=[a,b]$. By definition, $f\in BV(I)$ if and only if
\begin{equation}\label{bv}
V_I(f):=\sup \sum_{i=0}^m|f(x_{i+1})-f(x_i)|<\infty,
\end{equation}
where the supremum is taken over all partitions $a=x_0<x_1<\cdots <x_m=b$, $m\in\N$, of $I$. In this paper, we study algebraic properties of 
 Banach algebras  of $BV(I)$ functions.
  
 Recall that a unital commutative ring $R$  
is said to be {\em projective free} if every finitely generated projective $R$-module is free 
(i.e., if $M$ is an  $R$-module such that $M\oplus N\cong R^n$  for an $R$-module $N$ and  $n \in \Z_+\, (:=\N\cup\{0\})$, then $M\cong R^m$ for some  $m \in \Z_+$).  Let $M_n(R)$ denote the ring of 
$n\times n$ matrices over $R$ and $GL_n(R)$ its unit group.  In terms of matrices, the ring $R$ is projective free if and only if 
 for each $n\in \N$ every  $X\in M_n(R)\setminus  \{0_n, I_n\}$ such that $X^2=X$ (i.e., an idempotent)
has a form $X=S(I_r\oplus  0_{n-r})S^{-1}$  for some $ 
S\in GL_n(R)$, $r\in \{1,\dots, n-1\}$; here  $0_k$ and $I_k$ are zero  and  identity matrices in $M_k(R)$; see \cite[Prop.\,2.6]{C}. (For some examples of projective free rings and their applications, see, e.g., \cite{BS}, \cite{L}, \cite{Vi} and references therein.)

Let $A\subset \ell_\infty(I)$ be a complex Banach function algebra such that
the subalgebra $A\cap BV(I)$ is dense in $A$. We denote by $1_I$ the unit of $A$ (i.e., the constant function of value $1$ on $I$). For a nonzero idempotent $p\in A$, we set $A_p:=\{pg\, :\, g\in A\}$. Then $A_p$ is a closed subalgebra of $A$ with unit $p$.   If $M$ is an $A$-module, then $M_p:=\{pm\, :\, m\in M\}$ is a submodule which can be regarded as an
$A_p$-module as its annihilator contains ${\rm ker}\,(1_I-p)$. 

\begin{Th}\label{te1.1}
Let $M$ be a finitely generated projective $A$-module. Then there exists idempotents $p_1,\dots, p_k\in A$ such that $M=\oplus_{i=1}^k\, M_{p_i}$ and each $M_{p_i}$ is a free $A_{p_i}$-module. In particular, if $A\subset C(I)$, then it is a projective free ring.
\end{Th}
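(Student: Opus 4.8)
The plan is to pass from modules to vector bundles over the maximal ideal space, peel off the idempotents $p_i$ by means of a locally constant rank function, and then trivialize each constant-rank piece using the cohomological input advertised in the abstract, finally lifting that triviality back to the algebra.

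First I would represent $M$ as the image of an idempotent matrix: a splitting of a surjection $A^n\to M$ gives $M\cong\operatorname{im}P$ for some $P\in M_n(A)$ with $P^2=P$. Using the density of $A\cap BV(I)$ in $A$ together with the standard fact that idempotents sufficiently close in norm are conjugate by an element of $GL_n(A)$ near $I_n$, I may assume that all entries of $P$ lie in $A\cap BV(I)$. Applying the Gelfand transform, $\widehat P$ becomes a continuous idempotent-valued function on the compact space $X:=\mathcal M(A)$, so $E:=\operatorname{im}\widehat P$ is a complex vector bundle over $X$; proving that each $M_{p_i}$ is free will amount to trivializing $E$ over the corresponding clopen piece of $X$ and transporting that triviality back to the algebra.

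Next I would produce the $p_i$ from the rank function of $E$. The fibre rank $x\mapsto\operatorname{rank}E_x$ is a continuous $\Z$-valued function bounded by $n$, hence takes finitely many values $r_1,\dots,r_k$ with clopen level sets $U_1,\dots,U_k$ partitioning $X$. By the Shilov idempotent theorem each characteristic function $\chi_{U_i}$ is the Gelfand transform of an idempotent $p_i\in A$; these $p_i$ are orthogonal with $\sum_i p_i=1_I$, which yields the orthogonal decomposition $M=\bigoplus_{i=1}^k M_{p_i}$, the $i$-th summand corresponding to $E|_{U_i}$ of constant rank $r_i$. It then suffices to show that each $E|_{U_i}$ is trivial and that this forces $M_{p_i}$ to be $A_{p_i}$-free.

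The heart of the argument, and the step I expect to be the main obstacle, is the triviality of the constant-rank bundle $E|_{U_i}$. The entries of $P$ form a finite family $g_1,\dots,g_N\in BV(I)$ of finite total variation; filling in their jumps by segments produces a continuum $\Gamma\subset\Co^N$ with $\mathcal H^1(\Gamma)<\infty$, and the relevant part of $X$ is modelled on the polynomially convex hull $\widehat\Gamma$. By the structure of hulls of sets of finite linear measure ($\widehat\Gamma\setminus\Gamma$ being a one-dimensional analytic variety, hence of real dimension $2$) the covering dimension of $\widehat\Gamma$ is at most $2$, while the new result of the paper supplies $\check H^2(\widehat\Gamma,\Z)=0$. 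Since for every $r\ge 1$ the classifying space $BU(r)$ has $\pi_1=0$ and $\pi_2=\Z$, obstruction theory shows that the only obstruction to trivializing a complex bundle over a space of covering dimension $\le 2$ is its first Chern class $c_1\in\check H^2$; as this group vanishes, $E|_{U_i}$ is trivial. Finally I would pass from topological triviality to algebraic freeness: the fact that the restriction of $\widehat P$ to $U_i$ is conjugate to $I_{r_i}\oplus 0_{n-r_i}$ over $C(U_i)$ should force the corresponding idempotent over $A_{p_i}$ to be conjugate to $I_{r_i}\oplus 0_{n-r_i}$, because these algebras enjoy Bass stable rank one (a cancellation property flowing from the same cohomological input), giving $M_{p_i}\cong A_{p_i}^{\,r_i}$.

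For the final assertion, if $A\subset C(I)$ then any idempotent $f\in A$ satisfies $f(x)\in\{0,1\}$ and is continuous on the connected interval $I$, so $f\in\{0,1_I\}$; the decomposition above collapses to a single term with $p_1=1_I$, whence $M=M_{1_I}$ is free over $A$. As $M$ was an arbitrary finitely generated projective module, the matrix criterion recalled earlier shows that $A$ is projective free. The delicate points throughout are the vanishing $\check H^2(\widehat\Gamma,\Z)=0$ and the passage from triviality over $C(U_i)$ to freeness over $A_{p_i}$.
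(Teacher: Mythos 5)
Your overall architecture coincides with the paper's: represent $M$ by an idempotent $P\in M_n(A)$, use the continuity of the fibrewise rank of $\hat P$ to partition $\mathfrak M(A)$ into clopen pieces of constant rank, realize the characteristic functions of these pieces as Gelfand transforms of idempotents $p_i\in A$ via the Shilov idempotent theorem, and then trivialize each constant-rank bundle using ${\rm dim}\le 2$ together with the vanishing of $H^2(\cdot,\Z)$ (your obstruction-theoretic remark about $BU(r)$ being simply connected with $\pi_2=\Z$ is a correct way to see this). One inessential difference: you reduce to an idempotent with entries in $A\cap BV(I)$ and model the situation on the hull $\widehat\Gamma$ of a continuum built from those entries, whereas the paper applies Theorem \ref{te1.5} to $\mathfrak M(A)$ itself (that theorem is proved by writing $A$ as a direct limit of finitely generated subalgebras, so the reduction you perform by perturbing $P$ is already absorbed there); your version would additionally require checking that the nearby idempotent produced by holomorphic functional calculus genuinely has $BV$ entries, and that the bundle on $\mathfrak M(A)$ is pulled back from one on $\widehat\Gamma$.

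The genuine gap is the last step, which you yourself flag as delicate: the passage from triviality of $E|_{U_i}$ as a topological bundle to freeness of $M_{p_i}$ as an $A_{p_i}$-module. You attribute this to Bass stable rank one of $A_{p_i}$, but cancellation is not the relevant mechanism and does not close the gap. Cancellation over $A_{p_i}$ would let you conclude freeness only if you already knew that $M_{p_i}$ is \emph{stably free over $A_{p_i}$}; what you actually know is a statement about the idempotent $\widehat{p_iP}$ over $C(\mathfrak M_{s})$, and nothing about stable rank transports conjugacy of idempotents (or stable freeness) from $M_n(C(\mathfrak M_s))$ back to $M_n(A_{p_i})$ — the Gelfand transform is in general neither injective nor dense. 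The bridge is the Novodvorskii--Taylor theorem (\cite[\S7.5]{T}): the correspondence between isomorphism classes of finitely generated projective $A_{p_i}$-modules and isomorphism classes of complex vector bundles over $\mathfrak M_s$ is a bijection, so triviality of $E_s$ immediately yields $M_{p_i}\cong (A_{p_i})^{r_i}$. This is exactly how the paper concludes; without invoking it (or an equivalent Oka-type principle for Banach algebras), your argument stops one step short.
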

For instance, Theorem \ref{te1.1} holds true for complex Banach  function algebras $A\subset BV(I)$ and their uniform closures $\bar A\subset\ell_\infty(I)$. \smallskip

Let $A$ be an associative ring with unit. For a natural number $n$, let
$U_n(A)$ denote the set of {\em unimodular} elements of $A^n$, i.e.,
\[
U_n(A)=\left\{(a_1,\dots, a_n)\in A^n\, :\,  Aa_1+\cdots +Aa_n=A\right\}.
\]
An element $(a_1,\dots, a_n)\in U_n(A)$ is called {\em reducible} if there exist $c_1,\dots, c_{n-1}\in A$ such that
$
(a_1+c_1 a_n,\dots, a_{n-1}+c_{n-1}a_n)\in U_{n-1}(A).
$
The {\em stable rank} of $A$ is the least $n$ such that every element of $U_{n+1}(A)$ is reducible. 
The concept  of the stable rank introduced by Bass \cite{B} plays an important role in some stabilization problems of algebraic $K$-theory.  
Following Vaserstein \cite{V} we call a ring of stable rank $1$ a $B$-ring. (We refer to this paper for some examples and properties of $B$-rings.) 
\begin{Th}\label{te1.2}
Each  complex Banach  function algebra $A\subset BV(I)$ is a $B$-ring.
\end{Th}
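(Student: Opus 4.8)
The plan is to verify the defining property of a $B$-ring head on: given a unimodular pair $(f,g)\in U_2(A)$, I must produce $c\in A$ such that $f+cg$ lies in the group $A^{-1}$ of invertibles. Since $A$ is a commutative unital Banach algebra, an element $h\in A$ is invertible exactly when its Gelfand transform $\hat h$ is zero-free on the maximal ideal space $\mathfrak{M}(A)$, and unimodularity of $(f,g)$ says precisely that $\hat f$ and $\hat g$ have no common zero there. So the task reduces to choosing $c$ so that $\hat f+c\,\hat g$ is zero-free on $\mathfrak{M}(A)$, with the inverse landing back in $A$; the latter will be automatic provided $c$ is manufactured through the holomorphic functional calculus rather than by hand.

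To that end I would first encode the pair geometrically. Because $f,g\in BV(I)$, the map $t\mapsto(f(t),g(t))$ traces a curve in $\Co^{2}$ of finite length; filling in its at most countably many jumps by line segments yields a continuum $\Gamma\subset\Co^{2}$ of finite linear (one-dimensional Hausdorff) measure, with $V_I(f)+V_I(g)$ bounding that measure. Write $\widehat\Gamma$ for the polynomially convex hull of $\Gamma$. The essential structural step is to show that the joint spectrum $\sigma_A(f,g)=\{(\hat f(\xi),\hat g(\xi)):\xi\in\mathfrak{M}(A)\}$ is contained in $\widehat\Gamma$ and that unimodularity forces $(0,0)\notin\widehat\Gamma$. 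Granting this, it suffices to find a function $F$, holomorphic on a neighborhood of $\widehat\Gamma$ and of the form $F(z,w)=z+h(z,w)\,w$, that is zero-free on $\widehat\Gamma$: then $c:=h(f,g)\in A$ is well defined by the functional calculus, and $f+cg=F(f,g)$ is invertible because, by the spectral mapping theorem, $\sigma_A(F(f,g))=F(\sigma_A(f,g))\subset F(\widehat\Gamma)$ misses $0$.

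The construction of $F$ is where the paper's cohomological input is used. On the slice $\widehat\Gamma\cap\{w=0\}$ one is forced to take $F=z$, which is zero-free there since $(0,0)\notin\widehat\Gamma$; the obstruction to extending this to a global zero-free holomorphic function in the prescribed class $z+(w)$ is cohomological, and is controlled by the vanishing of the Čech cohomology of the hull of a finite-linear-measure continuum that underlies Theorem~\ref{te1.1} (the degree-two vanishing guaranteeing that the possible analytic structure of $\widehat\Gamma$ carries no obstruction). An Oka--Weil/Arens--Royden argument then upgrades a zero-free continuous extension to a holomorphic one and realizes the correction term in $A$. I expect the main obstacle to be the structural claim $\sigma_A(f,g)\subset\widehat\Gamma$, i.e. that every character of $A$ is captured, through the joint values of $(f,g)$, by the polynomially convex hull of the finite-length graph, since $A$ need not be inverse-closed in $\ell_\infty(I)$ and its characters may be more numerous than the point and one-sided-limit evaluations. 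It is worth noting why one cannot simply prove the stronger statement that $A^{-1}$ is dense (topological stable rank one) by perturbing a single $f$: the polynomially convex hull of the planar graph of one $BV$ function may fill in holes and acquire interior, so $\sigma_A(f)$ can be fat. It is exactly the passage to the pair, whose hull lives in $\Co^{2}$ and has trivial higher Čech cohomology despite its measure, that makes the reducibility argument succeed.
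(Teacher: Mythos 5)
Your plan --- reduce reducibility of a unimodular pair $(f,g)$ to finding a zero-free function of the form $z+h(z,w)w$ on a neighbourhood of the joint spectrum, then realize $c=h(f,g)$ by the functional calculus --- is a legitimate direct strategy and genuinely different from the paper's route, but as written it has two gaps, one of which is the crux of the whole theorem. First, the geometric claim is backwards: the containment $\sigma_A(f,g)\subset\widehat\Gamma$, which you flag as the main obstacle, is the easy and standard part (the joint spectrum of $(f,g)$ in $A$ is contained in the joint spectrum in the closed subalgebra $B$ generated by $f,g$, and $\mathfrak M(B)$ is identified with $\widehat\Gamma$). What fails is your assertion that unimodularity forces $(0,0)\notin\widehat\Gamma$: unimodularity of $(f,g)$ \emph{in $A$} only excludes $(0,0)$ from $\sigma_A(f,g)$, and this can be a proper subset of $\widehat\Gamma=\sigma_B(f,g)$, since the relation $uf+vg=1$ may require $u,v\in A\setminus B$. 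So you cannot work on all of $\widehat\Gamma$; you must work on $\sigma_A(f,g)$, or better on $\mathfrak M(A)$ itself.

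Second, and more importantly, the obstruction you need to kill is not the absolute group $H^2$ of the hull. Writing $X=\mathfrak M(A)$ and $\mathcal Z(g)=\{\hat g=0\}$, the function $\hat f$ is nonvanishing on $\mathcal Z(g)$, and the obstruction to extending it to a nonvanishing function on all of $X$ (equivalently, to finding $\hat c$ with $\hat f+\hat c\hat g$ zero-free) lies in the \emph{relative} group $H^2(X,\mathcal Z(g),\Z)$. The exact sequence $H^1(X,\Z)\to H^1(\mathcal Z(g),\Z)\to H^2(X,\mathcal Z(g),\Z)\to H^2(X,\Z)$ shows that $H^2(X,\Z)=0$ only makes the relative group a quotient of $H^1(\mathcal Z(g),\Z)$; it does not make it vanish, and $\mathcal Z(g)$ can a priori carry plenty of $H^1$. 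This is exactly the point the paper's proof is built around: it invokes Su\'arez's criterion that stable rank one follows from $H^2(\mathfrak M(A),\mathcal Z(J),\Z)=0$ for all closed ideals $J$, and kills these relative groups by the auxiliary-subalgebra trick --- $A_J:=\Co\cdot 1_A+J$ is again a Banach function subalgebra of $BV(I)$, its maximal ideal space is $\mathfrak M(A)$ with $\mathcal Z(J)$ collapsed to a point, and strong excision identifies $H^2(\mathfrak M(A),\mathcal Z(J),\Z)$ with $H^2(\mathfrak M(A_J),\Z)$, which vanishes by Theorem~\ref{te1.5}. Your sketch contains no substitute for this step, so the cohomological input you cite is not sufficient for the conclusion you draw.
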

\begin{E}\label{ex1.3}
{\rm Theorems \ref{te1.1} and \ref{te1.2} are applicable to  closed subalgebras of the following  function algebras:
(a) $(BV(I),\lVert\cdot\rVert_{BV})$, where $\lVert f\rVert_{BV}:=\sup_{I}|f|+V_I(f)$;
(b) $(AC(I),\lVert\cdot\rVert_{AC})$ - the algebra of absolutely continuous complex-valued functions on $I$, where $\|f\|_{AC}:=\max_I |f|+\int_I\, |f'(t)|\,dt$; (c)
$({\rm Lip}(I),\lVert\cdot\rVert_{{\rm Lip}})$ - the algebra of complex-valued Lipschitz functions on $I$, where $\|f\|_{\rm Lip}:=\max_I |f|+\sup_{x\ne y}\frac{|f(x)-f(y)|}{|x-y|}$;
(d) $(C^k(I),\lVert\cdot\rVert_{C^k})$ - the algebra of complex-valued $C^k$ functions on $I$, $k\ge 1$, where $\|f\|_{C^k}:=\sum_{i=0}^k \max_I |f^{(i)}|$.
}
\end{E}
\subsect{}
Theorems \ref{te1.1} and \ref{te1.2} are derived from  a general result presented in this section. For its formulation, recall that for a commutative unital complex Banach algebra $A$,  the maximal ideal space $\mathfrak M(A)\subset A^\ast$  
 is the set of nonzero homomorphisms $A \!\rightarrow\! \Co$ endowed with the {\em Gelfand topology}, the weak-$\ast$ topology of  $A^\ast$. It is a compact Hausdorff space contained in the unit sphere of $A^\ast$. The {\em Gelfand transform} defined by $\hat{a}(\varphi):=\varphi(a)$ for $a\in A$ and $\varphi \in \mathfrak M(A)$ is a nonincreasing-norm morphism from $A$ into  $C(\mathfrak M (A))$, the Banach algebra of complex-valued continuous functions on $\mathfrak M(A)$. Also,  recall that the {\em covering dimension} of a topological space $X$, denoted by ${\rm dim}\, X$, is the smallest integer $d$ such that every open cover of $X$ has an open refinement of order at most $d+1$. If no such integer exists, then $X$ is said to have infinite covering dimension.

 \begin{Th}\label{te1.5}
 Let $A\subset \ell_\infty(I)$ be a complex Banach  function algebra such that $A\cap BV(I)$ is dense in $A$. Then
 ${\rm dim}\,\mathfrak M(A)\le 2$ and the \v{C}ech cohomology group $H^2(\mathfrak M(A),\Z)=0$.
 \end{Th}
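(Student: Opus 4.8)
The plan is to reduce the assertion to a statement about the polynomially convex hull of a set of finite linear measure and then to exploit Alexander's structure theorem for such hulls. First I would pass to finitely generated subalgebras. Since $A\cap BV(I)$ is dense in $A$, the closed subalgebras $B\subset A$ generated by finite subsets of $A\cap BV(I)$ form a directed family whose union is dense in $A$, so the restriction maps identify $\mathfrak M(A)$ with $\varprojlim \mathfrak M(B)$. Covering dimension cannot increase under inverse limits of compact Hausdorff spaces ($\dim\varprojlim \mathfrak M(B)\le \sup_B\dim\mathfrak M(B)$), and \v{C}ech cohomology is continuous ($\check H^2(\varprojlim\mathfrak M(B),\Z)=\varinjlim\check H^2(\mathfrak M(B),\Z)$); hence it suffices to prove $\dim\mathfrak M(B)\le 2$ and $\check H^2(\mathfrak M(B),\Z)=0$ for each finitely generated $B$.

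Second, I would realize $\mathfrak M(B)$ as a polynomial hull. If $B$ is generated by $f_1,\dots,f_m\in A\cap BV(I)$, then $\varphi\mapsto(\varphi(f_1),\dots,\varphi(f_m))$ embeds $\mathfrak M(B)$ into $\Co^m$ as the joint spectrum, which is polynomially convex; since $K:=\overline{F(I)}$ (with $F=(f_1,\dots,f_m)$) is contained in $\mathfrak M(B)$ and is a boundary for $B$, one gets $\mathfrak M(B)=\widehat K$. The decisive point is that $K$ has finite linear measure: $\mathcal H^1(K)\le V_I(F)\le\sum_j V_I(f_j)<\infty$ because each $f_j$ is of bounded variation. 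The jumps of $F$ may disconnect $K$; filling each jump by the segment joining the two one-sided limits yields a continuum $\Gamma\supseteq K$ with $\mathcal H^1(\Gamma)<\infty$ and $\widehat K\subseteq\widehat\Gamma$. Treating $K$ componentwise for the cohomology, matters reduce to the hull of a continuum of finite linear measure.

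Third, and this is the new ingredient, I would prove: if $\Gamma\subset\Co^m$ is a continuum with $\mathcal H^1(\Gamma)<\infty$, then $\dim\widehat\Gamma\le 2$ and $\check H^2(\widehat\Gamma,\Z)=0$. Here I would invoke Alexander's theorem, by which $V:=\widehat\Gamma\setminus\Gamma$ is either empty or a pure one-dimensional analytic subvariety of $\Co^m\setminus\Gamma$. The dimension bound is then routine: $\dim\Gamma\le 1$ since $\Gamma$ has finite linear measure, $\dim V=2$, and the countable closed sum theorem gives $\dim\widehat\Gamma=\max(\dim\Gamma,\dim V)\le 2$.

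The hard part will be the vanishing $\check H^2(\widehat\Gamma,\Z)=0$, which I would attack through the long exact sequence of the pair $(\widehat\Gamma,\Gamma)$. As $\dim\Gamma\le1$ we have $H^2(\Gamma,\Z)=0$, so $H^2(\widehat\Gamma,\Z)$ is the cokernel of the connecting map $\delta\colon H^1(\Gamma,\Z)\to H^2(\widehat\Gamma,\Gamma;\Z)$. Since no component of $V$ can be compact (a compact positive-dimensional analytic curve in $\Co^m$ would violate the maximum principle), each component's normalization is an open Riemann surface; thus $H^2(V,\Z)=0$, and by excision $H^2(\widehat\Gamma,\Gamma;\Z)\cong H^2_c(V,\Z)$ is free on the set of components of $V$. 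The theorem therefore comes down to showing that $\delta$ is surjective, i.e. that the boundary in $\Gamma$ of each analytic piece of $V$ is a nontrivial $1$-cycle, so that every generator of $H^2_c(V)$ is hit by a class in $H^1(\Gamma)$. This surjectivity is exactly where the finiteness of the linear measure enters—through the maximum principle and the precise boundary behaviour of the finite-area varieties spanning $\Gamma$—and it is the step I expect to demand the most care. Granting it, $\check H^2(\widehat\Gamma,\Z)=0$, and running back through the componentwise reduction and the inverse limit yields $\dim\mathfrak M(A)\le 2$ and $\check H^2(\mathfrak M(A),\Z)=0$.
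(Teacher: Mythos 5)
Your reduction is sound and runs parallel to the paper's: pass to finitely generated closed subalgebras $B$ of $A\cap BV(I)$, use $\mathfrak M(A)=\varprojlim\mathfrak M(B)$ together with the behaviour of covering dimension and the continuity of \v{C}ech cohomology under inverse limits, identify $\mathfrak M(B)$ with the polynomial hull of the range of a $BV$ map (a compact subset of a continuum of finite $\mathscr H^1$-measure), and then invoke Alexander's theorem to get the dimension bound $\dim\widehat\Gamma\le 2$. All of this matches the paper's proof of Theorem \ref{te1.5} and part (a) of Theorem \ref{te1.4}.

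The genuine gap is in the one step that is the actual content of the result: the vanishing $H^2(\widehat\Gamma,\Z)=0$. You set up the long exact sequence of the pair $(\widehat\Gamma,\Gamma)$, correctly reduce the claim to the surjectivity of the connecting map $\delta\colon H^1(\Gamma,\Z)\to H^2(\widehat\Gamma,\Gamma;\Z)\cong H^2_c(\widehat\Gamma\setminus\Gamma,\Z)$, and then write ``granting it.'' That surjectivity --- that the boundary cycle of each component of the variety $\widehat\Gamma\setminus\Gamma$ is detected by a class in $H^1(\Gamma,\Z)$ --- is precisely the hard analytic-topological fact, and nothing in your sketch supplies it; the maximum principle and finiteness of area do not by themselves control how the ends of the variety accumulate on $\Gamma$, which can be a totally wild set of finite length. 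The paper avoids this pair sequence entirely: it uses the Novodvorskii--Taylor theory and the Hopf theorem (Proposition \ref{prop2.1}) to identify $H^2(\widehat K,\Z)$ with connected components of rank-one idempotents in $M_2(\hat A)$, observes that such an idempotent $G$ maps $\widehat K$ into the affine variety ${\rm ID}_1(\Co_2)$, and then (Lemma \ref{lem2.2}) uses the explicit coordinates \eqref{eq2.2} of that variety to push the associated line bundle down, via the projection $(z_1,\dots,z_4)\mapsto(z_1,z_2)$, to a subset of a polynomially convex set in $\Co^2$, where $H^2$ vanishes by \cite[Cor.\,2.3.6]{S}. Until you prove the surjectivity of $\delta$ (or replace it by an argument of comparable substance), your proposal does not establish the theorem.
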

 Note that for a complex Banach  function algebra $A\subset \ell_\infty(I)$ with uniform closure $\bar A$ 
 the maximal ideal spaces  $\mathfrak M(A)$ and $\mathfrak M(\bar A)$ are homeomorphic, see, e.g., \cite[Prop.\,3]{R}.
  \begin{E}\label{ex1.6}
 {\rm (1) As a Banach algebra, $BV(I)=\ell_1(I)\rtimes BV_+(I)$ - the semidirect  product of the closed ideal $\ell_1(I)$ and the Banach subalgebra $BV_+(I)$ of right-continuous $BV$ functions, see, e.g., \cite[Cor.\,2.2]{BB}.  Thus, the uniform closure $\overline{BV(I)}=c_0(I)\rtimes R_+(I)$ - the semidirect product 
 of the closed ideal $c_0(I)$ of functions with at most countable supports converging to $0$ and the Banach subalgebra $R_+(I)\subset\ell_\infty(I)$
of right-continuous functions having first kind discontinuities. Then each homomorphism in $\mathfrak M(\overline{BV(I)})$ is uniquely determined by its restrictions to $c_0(I)$ and $R_+(I)$. This leads to a continuous injection
$r:\mathfrak M(\overline{BV(I)})\to \mathfrak M(c(I))\times\mathfrak M(R_+(I))$, where $c(I):=\Co\cdot 1_I\oplus c_0(I)$. Next, $\mathfrak M(c(I))$ is homeomorphic to the one-pointed compactification of the discrete set $I$. In particular, ${\rm dim}\,\mathfrak M(c(I))=0$. In turn, there is a continuous surjection $p: \mathfrak M(R_+(I))\to \mathfrak M(C(I))=I$, the transpose of the embedding $C(I)\hookrightarrow R_+(I)$, whose fibres consist of two points over interior points of $I$ and of one point over the endpoints of $I$. (Specifically, if $\varphi\in p^{-1}(x)$, then $\varphi(f)$ is equal either to $f(x^-)$ or to $f(x^+)$.)
Moreover, ${\rm dim}\,\mathfrak M(R_+(I))=0$, see, e.g., \cite[Thm.\,1.7]{BK}.
These imply that $r$ embeds $\mathfrak M(\overline{BV(I)})$ into the zero-dimensional compact Hausdorff space $\mathfrak M(c(I))\times\mathfrak M(R_+(I))$; hence, ${\rm dim}\,\mathfrak M(BV(I))={\rm dim}\,\mathfrak M(\overline{BV(I)})=0$  and $H^i(\mathfrak M(BV(I)),\Z)=0$ for all $i\in\N$.\smallskip

 \noindent (2) If $A$ is one of the algebras of (b),(c) or (d) of Example \ref{ex1.3}, then $\mathfrak M(\bar A)$ is homeomorphic to $I$ and, hence, ${\rm dim}\, \mathfrak M(\bar A)=1$  and $H^i(\mathfrak M(\bar A),\Z)=0$ for all $i\in\N$.\smallskip
 
 \noindent (3) If $A$ is generated by $f_1,\dots, f_n\in BV(I)$, then $\mathfrak M(A)$ is homeomorphic to the polynomially convex hull of the  range of $(f_1,\dots, f_n): I\to\Co^n$ described by the Alexander theorem \cite{A} presented in the next section.
 }
 \end{E}
 \subsect{}
In the sequel, $\mathscr H^1$ denotes Hausdorff $1$-dimensional measure.
Also,
 \[
\widehat{K}:=\bigl\{z\in\Co^n\, :\, |p(z)|\le \mbox{$\sup_{K}$}\,|p|\quad \forall p\in\Co [z_1,\dots, z_n]\bigr\}
\]
stands for the polynomially convex hull of a bounded subset $K\subset\Co^n$.
If $X\Subset\Co^n$, then $P(X)\subset C(X)$ denotes the uniform closure of the restriction of polynomials $\Co [z_1,\dots, z_n]|_X$. A compact connected subset of $\Co^n$ is called a {\em continuum}.

 The following result is due to Alexander \cite[Thm.\,1]{A}.
 \begin{Th A}
Suppose $\Gamma\subset\Co^n$ is a compact subset of a continuum of finite $\mathscr H^1$-measure. Then $\widehat{\Gamma}\setminus\Gamma$ is a (possibly empty) pure $1$-dimensional complex analytic subset of $\Co^n\setminus\Gamma$. If $H^1(\Gamma,\Z)=0$, then $\widehat\Gamma=\Gamma$ and $P(\Gamma)=C(\Gamma)$.
 \end{Th A}
 For historical remarks and further developments related to this theorem, see  \cite{S}.
 
Using Theorem A we prove the following:
\begin{Th}\label{te1.4}
Suppose $\Gamma\subset\Co^n$ is a compact subset of a continuum of finite $\mathscr H^1$-measure. Then (a) ${\rm dim}\,\widehat\Gamma\le 2$; (b)
$H^2(\widehat\Gamma,\Z)=0$.
\end{Th}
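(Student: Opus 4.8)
The plan is to exploit the decomposition $\widehat\Gamma=\Gamma\sqcup V$ furnished by Theorem A, where $V:=\widehat\Gamma\setminus\Gamma$ is a pure $1$-dimensional complex analytic subset of $\Co^n\setminus\Gamma$ (open in $\widehat\Gamma$), and to feed this structure into dimension theory and into the long exact cohomology sequence of the pair $(\widehat\Gamma,\Gamma)$. For part (a) I would first bound $\dim\Gamma$: since $\Gamma$ lies in a continuum of finite $\mathscr H^1$-measure it has finite $\mathscr H^1$-measure, hence Hausdorff dimension $\le 1$, and as covering dimension is dominated by Hausdorff dimension, $\dim\Gamma\le 1$. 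A pure $1$-dimensional complex analytic set has real covering dimension $2$ (clear at regular points, and unchanged at the isolated singularities), so $\dim V\le 2$. Since $V$ is open in the compact metric space $\widehat\Gamma$, it is $\sigma$-compact; writing $V=\bigcup_k K_k$ with each $K_k$ compact (hence closed in $\widehat\Gamma$) and $\dim K_k\le\dim V\le 2$, I would present $\widehat\Gamma=\Gamma\cup\bigcup_k K_k$ as a countable union of closed subsets of dimension $\le 2$ and invoke the countable closed sum theorem for covering dimension in normal spaces to conclude $\dim\widehat\Gamma\le 2$.

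For part (b) I would run the long exact sequence of the compact pair $(\widehat\Gamma,\Gamma)$ in sheaf cohomology (which agrees with \v{C}ech cohomology for these compact metric spaces),
\[
\cdots\to H^1(\Gamma,\Z)\xrightarrow{\ \delta\ }H^2(\widehat\Gamma,\Gamma;\Z)\to H^2(\widehat\Gamma,\Z)\to H^2(\Gamma,\Z)\to\cdots .
\]
By part (a) we have $\dim\Gamma\le 1$, so $H^2(\Gamma,\Z)=0$ (\v{C}ech cohomology vanishes above the covering dimension of a paracompact space); therefore $H^2(\widehat\Gamma,\Z)=\mathrm{coker}\,\delta$, and the entire problem reduces to proving that $\delta$ is surjective. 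Here I would use the standard identification $H^k(\widehat\Gamma,\Gamma;\Z)\cong H^k_c(V,\Z)$ of the cohomology of a compact pair with the compactly supported cohomology of the open complement.

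To make $\mathrm{coker}\,\delta$ explicit I would compute $H^2_c(V,\Z)$. Writing $V=V_{\mathrm{reg}}\sqcup\Sigma$ with $\Sigma=\mathrm{Sing}\,V$ a closed discrete set and $V_{\mathrm{reg}}$ the smooth locus (an open oriented real surface), the long exact sequence of compactly supported cohomology for the open–closed pair $(V_{\mathrm{reg}},\Sigma)$ together with $H^k_c(\Sigma,\Z)=0$ for $k\ge 1$ gives $H^2_c(V,\Z)\cong H^2_c(V_{\mathrm{reg}},\Z)$. Since $\Co^n$ contains no compact positive-dimensional analytic subset (the coordinate functions would be nonconstant holomorphic functions on a compact Riemann surface), no component of $V_{\mathrm{reg}}$ is compact, so Poincar\'e duality gives $H^2_c\cong\Z$ for each component, whence $H^2_c(V,\Z)\cong\Z^{r}$ with $r$ the number of components of $V_{\mathrm{reg}}$.

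The heart of the matter, and the step I expect to be the main obstacle, is then the surjectivity of $\delta\colon H^1(\Gamma,\Z)\to H^2_c(V,\Z)$. The mechanism is already visible in the model $\Gamma=\partial\Di$, $\widehat\Gamma=\overline{\Di}$, $V=\Di$, where $\delta\colon H^1(\partial\Di,\Z)=\Z\to H^2_c(\Di,\Z)=\Z$ is an isomorphism: the fundamental compactly supported class of the disk is, via the boundary operator, dual to the generating cycle of the bounding circle. In general I would argue that $V$ carries a fundamental class in Borel–Moore homology $H_2^{BM}(V,\Z)$ realizing it as a holomorphic chain whose boundary (in the sense of Harvey–Lawson) is a $1$-cycle supported in $\Gamma$, and that the image of this class under the boundary map $H_2^{BM}(V,\Z)\to H_1(\Gamma,\Z)$ dualizes to $\delta$ and detects each generator of $H^2_c(V,\Z)$. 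Making this boundary analysis rigorous at the singularities of $V$ and for the possibly non-smooth $\Gamma$ — controlling how the ends of the analytic variety accumulate on $\Gamma$ and genuinely bound there — is where the real work lies; the finiteness of $\mathscr H^1(\Gamma)$, and the resulting finite area of $V$, is precisely what makes such a boundary cycle available.
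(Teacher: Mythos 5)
Your part (a) is fine and is essentially the paper's own argument: Szpilrajn's inequality gives $\dim\Gamma\le\dim_{\mathscr H}\Gamma\le 1$, compact pieces of the pure $1$-dimensional analytic set $\widehat\Gamma\setminus\Gamma$ have covering dimension $\le 2$, and the countable closed sum theorem finishes it.

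Part (b), however, contains a genuine gap, and you have located it yourself: the surjectivity of the connecting homomorphism $\delta\colon H^1(\Gamma,\Z)\to H^2_c(V,\Z)$. Everything before that point is a correct but essentially formal reformulation --- since $H^2(\Gamma,\Z)=0$ (as $\dim\Gamma\le 1$) and $H^2(\widehat\Gamma,\Gamma;\Z)\cong H^2_c(V,\Z)$, the exact sequence shows that $H^2(\widehat\Gamma,\Z)=\mathrm{coker}\,\delta$, so the assertion ``$\delta$ is onto'' \emph{is} the theorem, merely restated. The Borel--Moore/Harvey--Lawson mechanism you invoke to attack it is only a heuristic here: $V$ may have infinitely many components and infinite area near $\Gamma$ a priori, its closure meets $\Gamma$ in a set that is only a continuum of finite linear measure (not a manifold, not even locally connected in any controlled way), and establishing that $[V]$ bounds a $1$-cycle \emph{supported in} $\Gamma$ whose dual class hits each generator of $H^2_c(V,\Z)\cong\bigoplus_j\Z$ is a substantial boundary-regularity problem that you do not carry out. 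As written, the proposal proves nothing beyond the (known) vanishing of $H^2(\Gamma,\Z)$.

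For comparison, the paper sidesteps this analysis entirely. It encloses $\Gamma$ in a Lipschitz curve $K=(f_1,\dots,f_n)(I)$, identifies $\widehat K$ with the maximal ideal space of the Lipschitz algebra generated by the $f_i$, and uses the Novodvorskii--Taylor theory together with Hopf's theorem (valid since $\dim\widehat K\le 2$) to reduce $H^2(\widehat K,\Z)=0$ to the triviality of rank-one bundles arising from rank-one idempotents $G\in{\rm ID}_1(\hat A_2)$. The key trick is then geometric: the image $S=G(K)$ lies in the affine variety ${\rm ID}_1(\Co_2)$, which projects (off two fibres biholomorphic to $\Co$) homeomorphically to $\Co^2$, so the bundle descends to $\pi(\widehat S)\subset\widehat{\pi(S)}\subset\Co^2$, where $H^2$ of a polynomially convex compactum in $\Co^2$ is already known to vanish. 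If you want to rescue your approach you must either supply the Harvey--Lawson-type boundary theorem for continua of finite linear measure, or switch to an argument of this bundle-theoretic kind.
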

Theorem \ref{te1.5} is derived from Theorem \ref{te1.4}.

\sect{Proof of Theorem \ref{te1.4}}
(a) Let ${\rm dim}_{\mathscr H}$ denote the Hausdorff dimension.
By the Szpilrajn theorem, see, e.g., \cite[pp.\,62-63]{H}, and because $\mathscr H^1(\Gamma)<\infty$, 
\begin{equation}\label{e1.3.1}
 {\rm dim}\,\Gamma\le {\rm dim}_{\mathscr H}\Gamma=1.
\end{equation}
In turn, as  $\widehat\Gamma\setminus\Gamma\ne\emptyset$ is a $1$-dimensional complex analytic space, its compact subsets have covering dimension $\le 2$. These  imply that ${\rm dim}\,\widehat\Gamma\le 2$,
 see, e.g., \cite[Ch.\,2,\,Th.\,9-11]{N}.\smallskip
 
 (b) Let $E\subset\Co^n$ be a continuum with $\mathscr H^1(E)<\infty$ containing $\Gamma$. Then according to \cite[Ch.\,3,\,Exercise~3.5]{F}, there are  functions $f_1,\dots, f_{n}\in {\rm Lip}(I)$ such that
\[
E\subset K:=(f_1,\dots, f_{n})(I)\quad {\rm and}\quad \mathscr H^1(K)\le 2\mathscr H^1(E).
\]  
By $A\subset {\rm Lip}(I)$ we denote the unital complex closed subalgebra generated by $f_1,\dots, f_{n}$.
The maximal ideal space of $A$ is naturally identified with $\widehat K$, see, e.g., \cite[Ch.III,\,Thm.\,1.4]{G}. According to part (a) of the theorem,  ${\rm dim}\,\widehat K\le 2$. We use the following 
\begin{Prop}\label{prop2.1}
Suppose $\mathcal A$ is a complex Banach  function algebra defined on its maximal ideal space $\mathfrak M(\mathcal A)$. If ${\rm dim}\,\mathfrak M(\mathcal A)\le 2$, then there are bijections
\begin{itemize}
\item[(a)]
$c_1:{\rm Vect}_1(\mathfrak M(\mathcal A))\to H^2(\mathfrak M(\mathcal A),\Z)$,
where ${\rm Vect}_1(\mathfrak M(\mathcal A))$ is the set of isomorphism classes of complex rank one vector bundles over $\mathfrak M(\mathcal A)$;
\item[(b)]
$h: [\mathfrak M(\mathcal A),\mathbb S^2]\to  H^2(\mathfrak M(\mathcal A),\Z)$, 
where $[\mathfrak M(\mathcal A),\mathbb S^2]$ is the set of homotopy classes of continuous maps from $\mathfrak M(\mathcal A)$ to the two-dimensional unit sphere $\mathbb S^2$; 
\item[(c)]
$i:[{\rm ID}_1(\mathcal A_2)]\to [\mathfrak M(\mathcal A),\mathbb S^2]$, 
where $[{\rm ID}_1(\mathcal A_2)]$ is the set of connectivity components of the class of idempotent $2\times 2$ matrices with entries in $\mathcal A$ of constant rank $1$.
\end{itemize}
\end{Prop}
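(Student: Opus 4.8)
The plan is to establish the three bijections in turn, reducing (a) and (b) to standard classification theorems and concentrating the real work in (c). For (a), which needs no dimension hypothesis, I would use the classifying-space description of line bundles. Since $\mathfrak M(\mathcal A)$ is compact Hausdorff, hence paracompact, isomorphism classes of rank-one complex vector bundles are in natural bijection with $[\mathfrak M(\mathcal A),\mathbb{CP}^\infty]$, where $\mathbb{CP}^\infty=BU(1)$. As $\mathbb{CP}^\infty=K(\Z,2)$, one has $[\mathfrak M(\mathcal A),\mathbb{CP}^\infty]=H^2(\mathfrak M(\mathcal A),\Z)$ in \v{C}ech cohomology, and the resulting bijection is exactly the first Chern class $c_1$. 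The covering-dimension bound enters only in (b) and (c).

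For (b) the natural candidate is $h([f])=f^\ast s$, with $s$ a generator of $H^2(\mathbb S^2,\Z)$. On a finite polyhedron of dimension $\le 2$ this is a bijection by the classical Hopf classification theorem. To transfer it to $\mathfrak M(\mathcal A)$ I would realize the latter as the inverse limit $\varprojlim_\alpha N_\alpha$ of the nerves of its finite open covers; because ${\rm dim}\,\mathfrak M(\mathcal A)\le 2$ one may use only covers whose nerves are polyhedra of dimension $\le 2$. \v{C}ech cohomology is by definition the direct limit $\varinjlim_\alpha H^2(N_\alpha,\Z)$, and since $\mathbb S^2$ is a compact ANR one has $[\mathfrak M(\mathcal A),\mathbb S^2]=\varinjlim_\alpha[N_\alpha,\mathbb S^2]$; applying Hopf's theorem stagewise and passing to the limit yields $h$.

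Turning to (c), I would first pin down the target up to homotopy. The set ${\rm ID}_1(M_2(\Co))$ of rank-one idempotents is the conjugation orbit $GL_2(\Co)/(GL_1(\Co)\times GL_1(\Co))$ (an idempotent $P$ being the projection attached to the ordered pair of complementary lines ${\rm im}\,P$, $\ker P$); the Gram--Schmidt retraction carries it, through idempotents, onto the orthogonal projections, which are precisely $\mathbb{CP}^1=\mathbb S^2$. Hence a constant-rank-one idempotent $P\in M_2(\mathcal A)$ has Gelfand transform a continuous map $\hat P:\mathfrak M(\mathcal A)\to{\rm ID}_1(M_2(\Co))\simeq\mathbb S^2$, and $i$ is $P\mapsto[\hat P]$; continuity of the Gelfand transform $\mathcal A\to C(\mathfrak M(\mathcal A))$ makes this constant on connectivity components, so $i$ is well defined.

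It then remains to prove $i$ bijective, equivalently that the Gelfand transform induces a bijection on $\pi_0$ of rank-one idempotents over $\mathcal A$ and over $C(\mathfrak M(\mathcal A))$. Over $C(\mathfrak M(\mathcal A))$ the picture is transparent: the retraction above identifies $\pi_0$ with ${\rm Vect}_1(\mathfrak M(\mathcal A))$ via $P\mapsto{\rm im}\,\hat P$, and every line bundle $L$ occurs, since a monomorphism $L\hookrightarrow\underline{\Co}^2$ is a nowhere-vanishing section of the rank-two bundle ${\rm Hom}(L,\underline{\Co}^2)$, whose obstructions—the fibre $\Co^2\setminus\{0\}\simeq\mathbb S^3$ being $2$-connected—lie in $H^{j}(\mathfrak M(\mathcal A),\Z)$ with $j\ge 4$ and hence vanish as ${\rm dim}\,\mathfrak M(\mathcal A)\le 2$. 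The genuine difficulty, and the step I expect to be the main obstacle, is lifting from $C(\mathfrak M(\mathcal A))$ back to $\mathcal A$: the Gelfand transform is only an inclusion onto a proper closed subalgebra, so entries cannot be approximated directly. Here I would invoke the Banach-algebra input, a Novodvorskii/Oka-principle-type theorem asserting that the Gelfand transform induces a bijection of homotopy classes of idempotent matrices over a commutative unital complex Banach algebra; its proof rests on the Arens--Calder\'on trick and the Riesz holomorphic functional calculus, which lift both idempotents and paths of idempotents from $C(\mathfrak M(\mathcal A))$ to $\mathcal A$. Combining this lifting with (a), (b) and the $C(\mathfrak M(\mathcal A))$-side computation gives that $i$ is a bijection.
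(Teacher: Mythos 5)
Your proposal is correct and follows essentially the same route as the paper, whose proof simply cites the three standard ingredients you reconstruct in detail: the first Chern class classification of line bundles for (a), Hu's extension of the Hopf theorem to compact (normal) spaces of covering dimension $\le 2$ for (b), and the Novodvorskii--Taylor Oka principle identifying connectivity components of idempotent matrices over $\mathcal A$ and over $C(\mathfrak M(\mathcal A))$ for (c). One minor remark: the paper observes that (c) requires no dimension hypothesis, and indeed your detour through ${\rm Vect}_1$ and obstruction theory is avoidable, since ${\rm ID}_1(C(\mathfrak M(\mathcal A))_2)=C(\mathfrak M(\mathcal A),{\rm ID}_1(\Co_2))$ and ${\rm ID}_1(\Co_2)$ deformation retracts onto $\mathbb{CP}^1=\mathbb S^2$, which gives $\pi_0\cong[\mathfrak M(\mathcal A),\mathbb S^2]$ directly.
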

\begin{proof}
In (a) and (c) condition ${\rm dim}\,\mathfrak M(\mathcal A)\le 2$ is not required. In fact,
bijection $c_1$ is determined by assigning to a bundle its first Chern class
while existence of bijection $i$ follows from the Novodvorskii-Taylor theory, see     \cite[\S5.3,\,p.\,186]{T}. Finally, existence of bijection $h$ under condition ${\rm dim}\,\mathfrak M(\mathcal A)\le 2$ follows from
the Hopf theorem, see, e.g., \cite{Hu}.
\end{proof}
Let us proceed with the proof of the theorem. We set  $F:=(f_1,\dots, f_n): I\to\Co^n$.  The algebra $A$ is isomorphic to its Gelfand transform $\hat{A}$ - a compex Banach function algebra on $\widehat K=\mathfrak M(A)$ with norm induced from $A$ such that  $F^*\hat A=A$. Since $\mathfrak M(\hat A)=\widehat K$ as well,
 in the notation of the proposition, each 
 \begin{equation}\label{eq2.1}
G=\left[
\begin{array}{cc}
g_1&g_2\\
g_3&g_4
\end{array}
\right]\in {\rm ID}_1(\hat A_2)
\end{equation}
can be viewed as a map from $\widehat K$ to $M_2(\Co)$ with coordinates in the algebra $\hat A$ whose image ${\rm ID}_1(\Co_2)$ consists of idempotent matrices of rank $1$. Thus,
\[
Z=\left[
\begin{array}{cc}
z_1&z_2\\
z_3&z_4
\end{array}
\right]
\in {\rm ID}_1(\Co_2)
\]
if and only if ${\rm rank}\, Z=1$ and
\[
Z^2-Z=\left[
\begin{array}{ccc}
z_1&z_2\\
z_3&z_4
\end{array}
\right]\cdot
\left[
\begin{array}{ccc}
z_1-1&z_2\\
z_3&z_4-1
\end{array}
\right]=
\left[
\begin{array}{ccc}
0&0\\
0&0
\end{array}
\right] 
\]
which implies
\begin{equation}\label{eq2.2}
z_4=1-z_1\quad {\rm and}\quad 
\begin{array}{ccc}
\displaystyle z_3=\frac{z_1(z_1-1)}{z_2}&{\rm if}&z_2\ne 0,\medskip\\
z_1\in\{0,1\},\ z_3\in\Co&{\rm if}&z_2=0.
\end{array}
\end{equation}

Let $S:=G(K)=(G\circ F)(I)\subset M_2(\Co)\cong\Co^4$. Since the entries of the map $G\circ F$ lie in ${\rm Lip}(I)$,
$S$ is a continuum with $\mathscr H^1(S)<\infty$.
Moreover, $G(\widehat K)\subset\widehat S\subset  {\rm ID}_1(\Co_2)$ and ${\rm dim}\,\widehat S\le 2$ by part (a) of the theorem. By the definition, the identity map
$\Co^4 \supset {\rm ID}_1(\Co_2)\to  {\rm ID}_1(\Co_2)\subset M_2(\Co)$ determines the holomorphic idempotent $Z$ on ${\rm ID}_1(\Co_2)$ whose pullback by $G$ coincides with $G\in {\rm ID}_1(\hat A_2)$. According
to Proposition \ref{prop2.1}, $Z|_{\widehat S}$ determines a complex rank one vector bundle over $\widehat S$ whose triviality implies that $Z|_{\widehat S}$ and, hence, $G$ belong to the connectivity components (in
${\rm ID}_1(C(\widehat S)_2)$ and ${\rm ID}_1(\hat A_2)$, respectively) of the constant idempotent $I_1\oplus 0_1$. If the latter is true for all $G\in {\rm ID}_1(\hat A_2)$, then Proposition \ref{prop2.1} implies that
$H^2(\widehat K,\Z)=0$. But  $\widehat{\Gamma}\subset\widehat K$ and ${\rm dim}\,\widehat K\le 2$ and so the previous condition implies that $H^2(\widehat \Gamma,\Z)=0$ by the Hopf theorem, as required. Thus, to complete the proof of the theorem,  it suffices to prove
\begin{Lm}\label{lem2.2}
Each complex rank one vector bundle over $\widehat S$ is trivial.
\end{Lm}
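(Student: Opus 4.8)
The plan is to reduce the statement to the vanishing of a cohomology group and then to analyze the analytic part of $\widehat S$. By Proposition~\ref{prop2.1}(a) the isomorphism classes of complex rank one vector bundles over $\widehat S$ are in bijection with $H^2(\widehat S,\Z)$, so it suffices to prove $H^2(\widehat S,\Z)=0$. Since $S=(G\circ F)(I)$ is the image of $I$ under a map whose entries lie in ${\rm Lip}(I)$, it is a continuum with $\mathscr H^1(S)<\infty$, so Theorem~A applies to $S$: the set $V:=\widehat S\setminus S$ is a (possibly empty) pure $1$-dimensional complex analytic subset of $\Co^4\setminus S$. If $V=\emptyset$ then $\widehat S=S$ has ${\rm dim}\,\widehat S\le 1$ by \eqref{e1.3.1}, so $H^2(\widehat S,\Z)=0$ and there is nothing more to prove; hence I assume $V\neq\emptyset$.

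Now the embedding $\widehat S\subset {\rm ID}_1(\Co_2)$ enters. First I would record that ${\rm ID}_1(\Co_2)$ is the smooth closed complex submanifold $\{Z:Z^2=Z,\ {\rm tr}\,Z=1\}$ of $\Co^4$, and that $P\mapsto({\rm Im}\,P,{\rm ker}\,P)$ identifies it biholomorphically with the complement of the diagonal in $\mathbb{CP}^1\times\mathbb{CP}^1$; since the diagonal is an ample divisor, this complement is an affine (Stein) surface. Thus $\widehat S$ is a holomorphically convex compact subset of a two-dimensional Stein manifold, and $V$ is a $1$-dimensional analytic subset of it. Because a positive-dimensional analytic subset of $\Co^n$ is never compact, no component of $V$ is compact; by the maximum principle each component accumulates only on $S$, so $\widehat S=\overline V\cup S$ is the closure of a bounded analytic curve whose frontier lies in the finite-length set $S$.

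Next I would use holomorphicity. By the Novodvorskii--Taylor theory underlying Proposition~\ref{prop2.1}(c), every rank one bundle over $\widehat S$ is represented by a holomorphic idempotent over $\hat A$, hence is the pullback of the tautological bundle on $\mathbb{CP}^1$ under a holomorphic map $\widehat S\to {\rm ID}_1(\Co_2)\to\mathbb{CP}^1$. Its restriction to $V$ is a holomorphic line bundle on the Stein curve $V$, and ${\rm Pic}\,V=0$ (for a one-dimensional Stein space $H^1(V,\mathcal O)=H^2(V,\Z)=0$), so the bundle is trivial on $V$; its restriction to $S$ is trivial as well because ${\rm dim}\,S\le 1$. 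It remains to glue these two trivializations into a global one over $\widehat S$.

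This gluing is the step I expect to be the main obstacle, and it is precisely where the special geometry must be used: for the hull of an arbitrary finite-length continuum no such gluing is available a priori — that it nevertheless always succeeds is the content of Theorem~\ref{te1.4}(b), which the present reduction is meant to establish, so the embedding into ${\rm ID}_1(\Co_2)$ cannot be dispensed with. Concretely, I would cover $\widehat S$ by a neighbourhood of $S$, on which the bundle is trivial because the neighbourhood retracts onto the one-dimensional set $S$, together with $\overline V$, and locate the obstruction to a common trivialization in an $H^1$ of the frontier $\overline V\cap S$. The task is then to kill this class using that $\mathscr H^1(S)<\infty$ and that, inside the two-dimensional affine manifold ${\rm ID}_1(\Co_2)$, $V$ is a divisor with controlled behaviour along $S$; carrying this out is the technical heart of the argument.
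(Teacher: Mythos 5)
Your proposal does not prove the lemma: you yourself flag the decisive step --- gluing the trivialization over the analytic part $\widehat S\setminus S$ to the one over a neighbourhood of $S$ --- as ``the technical heart'' and leave it undone, and that step is exactly where the difficulty lives. In a Mayer--Vietoris patching of two trivializations over open sets $U_S\supset S$ and $U_V$ containing the closure of $\widehat S\setminus S$, the obstruction is a class in $H^1(U_S\cap U_V,\Co^*)\cong H^1(U_S\cap U_V,\Z)$ modulo the images from the two pieces, and nothing you have said controls the $H^1$ of that intersection: the analytic curve can accumulate on $S$ in a complicated way, and the hypotheses ``$\mathscr H^1(S)<\infty$'' and ``the curve is a divisor with controlled behaviour along $S$'' are not by themselves a mechanism for killing this class. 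There are also smaller problems: vanishing of the holomorphic Picard group of the open Stein curve $\widehat S\setminus S$ does not directly yield a trivialization on a neighbourhood of its closure inside $\widehat S$; and the lemma concerns \emph{all} topological rank one bundles over $\widehat S$, not only those represented by holomorphic idempotents, so the reduction to pullbacks of the tautological bundle needs justification.

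The paper's proof takes a different and complete route that exploits the concrete coordinates of ${\rm ID}_1(\Co_2)$ rather than its Stein geometry. By \eqref{eq2.2}, the linear projection $\pi(z_1,z_2,z_3,z_4)=(z_1,z_2)$ is injective on ${\rm ID}_1(\Co_2)$ away from the two fibres over $w_0=(0,0)$ and $w_1=(1,0)$, each of which is a copy of $\Co$. Hence the given bundle is trivial over the compact plane sets $Z_i=\pi^{-1}(w_i)\cap\widehat S$, and after choosing a cover adapted to $\pi$ the defining cocycle descends to a cocycle on $\pi(\widehat S)\subset\widehat{\pi(S)}\subset\Co^2$. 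Since $\widehat{\pi(S)}$ is a polynomially convex compact subset of $\Co^2$, $H^2(\widehat{\pi(S)},\Z)=0$ by \cite[Cor.\,2.3.6]{S}, and the Hopf theorem transfers this vanishing to $\pi(\widehat S)$; so the descended bundle, and therefore the original one, is trivial. To salvage your approach you would need to supply an actual proof that the $H^1$-obstruction along the frontier vanishes, i.e., a substitute for this descent to $\Co^2$, which is absent from your write-up.
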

\begin{proof}
Let $V$ be a complex rank one vector bundle over $\widehat S$. Consider the linear projection $\pi: \Co^4\to\Co^2$, $\pi(z_1,\dots, z_4):=(z_1,z_2)$.  For
$w_i=(i,0)$, $i=0,1$, sets $ \pi^{-1}(w_i)\cap {\rm ID}_1(\Co_2)$ are biholomorphic to $\Co$, see \eqref{eq2.2}. Thus, $Z_i:=\pi^{-1}(w_i)\cap\widehat S$ are homeomorphic to compact subsets of $\Co$; hence, $H^2(Z_i,\Z)=0$, $i=0,1$. This implies that $V|_{Z_i}$ are trivial bundles. In particular, there are disjoint open neighbourhoods $U_i\subset\widehat S$ of
$Z_i$ and 
nonvanishing continuous sections $s_i:U_i\to V$, $i=0,1$. Since
\[
Z_i=\bigcap_{O_i\in \mathcal N(w_i)}\pi^{-1}(O_i)\cap\widehat S,\quad i=0,1,
\]
where $\mathcal N(w_i)$ is the set of all open neighbourhoods of $w_i$, without loss of generality we may assume that $U_i=\pi^{-1}(O_i)\cap\widehat S$ for some $O_i\in \mathcal N(w_i)$, $i=0,1$. Let  $U_2,\dots, U_k$ be relatively compact open subsets of  $\widehat S\setminus \pi^{-1}(w_i)$ such that $(U_i)_{i=0}^k$ is an open cover of $\widehat S$ and each $V|_{U_i}$ is trivial. 
Due to \eqref{eq2.2}, $\pi$ maps $\widehat S\setminus (\pi^{-1}(w_0)\cup\pi^{-1}(w_1))$ homeomorphically onto $\pi(\widehat S)\setminus \{w_0,w_1\}$. Thus, there exist (relatively compact) open subsets $O_i\subset
\pi(\widehat S)\setminus \{w_0,w_1\}$, $2\le i\le k$, such that $\pi^{-1}(O_i)\cap\widehat S=U_i$. Let $s_i: U_i\to V$ be nonvanishing continuous sections of $V|_{U_i}$, $i=2,\dots, k$. Then $V$ is determined by a continuous cocycle $\{c_{ij}\}_{0\le i,j\le k}$,
\[
c_{ij}:=s_i^{-1}\cdot s_j\in C(U_i\cap U_j,\Co^*), \quad 0\le i,j\le k.
\]
Further, since each nonvoid $U_i\cap U_j$ is a subset of $\widehat S\setminus (\pi^{-1}(w_0)\cup\pi^{-1}(w_1))$, due to \eqref{eq2.2} there exist $d_{ij}\in C(O_i\cap O_j,\Co^*)$ such that $\pi^{*}d_{ij}=c_{ij}$.  The family $\{d_{ij}\}_{0\le i,j\le k}$ is a $1$ cocycle on the cover $(O_i)_{0\le i\le k}$ of $\pi(\widehat S)$ which determines a bundle $V'$ on $\widehat S$ such that
$\pi^*V'|_{\widehat S}=V$. Thus to complete the proof it suffices to show that $V'$ is a trivial bundle. 

Indeed, by definition, $\pi(\widehat S)\subset \widehat{\pi(S)}\subset\Co^2$. Since $\pi(S)=:(\pi\circ G\circ F)(I)$ is a continuum with $\mathscr H^1(\pi(S))<\infty$, part (a) of the theorem implies that ${\rm dim}\,  \widehat{\pi(S)}\le 2$. In addition, $\widehat{\pi(S)}$ is a polynomially convex subset of $\Co^2$; hence, $H^2(\widehat{\pi(S)},\Z)=0$ (see, e.g., \cite[Cor.\,2.3.6]{S}). These imply that
$H^2(\pi(\widehat S),\Z)=0$ by the Hopf theorem. In particular, each complex rank one vector bundle over $\pi(\widehat S)$ is trivial; hence the bundle $V'$ is trivial as well, as required.
\end{proof}
The proof of Theorem \ref{te1.4} is complete.
\sect{Proofs of Theorems \ref{te1.1}, \ref{te1.2}, \ref{te1.5}}
\begin{proof}[Proof of Theorem \ref{te1.5}]
Since $A\cap BV(I)$ is dense in $A$,
the algebra $A=\varinjlim A_\alpha$ - the injective limit of the injective system $\{A_\alpha, i_{\alpha}^\beta\}$ of
finitely generated subalgebras of $A\cap BV(I)$; here $i_{\alpha}^\beta: A_\alpha\hookrightarrow A_\beta$
is the inclusion. In turn, the maximal ideal space $\mathfrak M(A)=\varprojlim\mathfrak M(A_\alpha)$ - the projective limit of the
adjoint projective system $\{\mathfrak M(A_\alpha), (i_{\alpha}^\beta)^*\}$
of the maximal ideal spaces, see, e.g., \cite[Prop.\,9]{R}.
Suppose  $A_\alpha$ is generated by  $f_{1,\alpha},\dots, f_{k_{\alpha},\alpha}\in BV(I)$. Then the range of the map $F_{\alpha}=(f_{1,\alpha},\dots, f_{k_{\alpha},\alpha}): I\to\Co^{k_\alpha}$ denoted by $\Gamma_\alpha$ is contained in a continuum of finite $\mathscr H^1$ measure, see, e.g., \cite[Ch.\,3,\,Exercise\,3.1]{F}. Moreover, $\mathfrak M(A_\alpha)$ is homeomorphic to $\widehat{\Gamma}_\alpha$, see, e.g., \cite[Ch.\,III,\,Thm.\,1.4]{G}, where ${\rm dim}\,\widehat{\Gamma}_\alpha\le 2$  and $H^2(\widehat{\Gamma}_\alpha,\Z)=0$ by Theorem
\ref{te1.4}. Then, since $\mathfrak M(A)=\varprojlim\mathfrak M(A_\alpha)$, ${\rm dim}\,\mathfrak M(A)\le 2$, see, e.g., \cite[Thm.\,3.3.6]{E}, and
$H^2(\mathfrak M(A),\Z)=0$, see, e.g., \cite[Thm.\,3.1,\,p.\,261]{EM}, as required.
\end{proof}
\begin{proof}[Proof of Theorem \ref{te1.1}]
Let $M$ be a finitely generated projective $A$-module determined by 
an idempotent $I\in M_n(A)$.
The rank of $M$ is a continuous  $\Z_+$-valued function on $\mathfrak M(A)$ equal to the rank of the Gelfand transform of $I$ at points of $\mathfrak M(A)$ (see, e.g., \cite[\S7.6]{T}).  Let $0\le i_1<\cdots < i_k\le n$ be the range of this function and $\mathfrak M_s\subset\mathfrak M(A)$ be the clopen subset where $\hat{I}$ has constant rank $i_s$.
Then $\mathfrak M(A)=\sqcup_{s=1}^k\mathfrak M_s$ and 
by the Shilov idempotent theory, see, e.g., \cite[Ch.\,III,\,Cor.\,6.5]{G},
there exist idempotents $p_1,\dots, p_k\in A$ with $\sum_{s=1}^k p_s= 1_A$ such that the maximal ideal space of $A_{p_s}$ is $\mathfrak M_s$. We have $A=\oplus_{s=1}^k A_{p_s}$ which leads to the decomposition $I=\oplus_{s=1}^k\,p_s\cdot I$, where $p_s\cdot I\in M_n(A_{p_s})$ is the idempotent determining the projective $A_{p_s}$-module $M_{p_s}$.

Next, due to the Novodvorskii-Taylor theory, see \cite[\S7.5,\,Thm.]{T}, there exists a bijection between
 isomorphism classes of finitely generated projective $A_{p_s}$-modules and 
 complex vector bundles over $\mathfrak M_s$. In our case, the isomorphism class of $M_{p_s}$ corresponds to the isomorphism class of a bundle $E_s$ over $\mathfrak M_s$ of complex rank $i_s$.
 Since ${\rm dim}\, \mathfrak M_s\le 2$ and
$H^2(\mathfrak M_s,\Z)=0$ by Theorem \ref{te1.5}, the bundle $E_s$ is trivial 
(i.e., isomorphic to $\mathfrak M_s\times\Co^{i_s}$) which implies that $M_{p_s}$ is isomorphic to the free module $(A_{p_s})^{i_s}$, as required.
\end{proof}
\begin{proof}[Proof of Theorem \ref{te1.2}]
Let $J\subset A\subset BV(I)$ be a closed ideal. Its hull $\mathcal Z( J)\subset\mathfrak M(A)$ is given by 
\[
\mathcal Z(J):=\{x\in\mathfrak M(A)\,:\,\hat f(x)=0\quad\forall f\in J\}.
\]
Consider the closed unital subalgebra $A_J:=\{c\cdot  1_A+f\,:\, c\in\Co,\ f\in J\}\subset A$. By $Q_J: \mathfrak M(A)\rightarrow \mathfrak M(A_J)$ we denote the continuous map transposed to the embedding $A_J\hookrightarrow A$. Then $Q_J$ is a surjection which is one-to-one on $\mathfrak M(A)\setminus\mathcal Z(J)$ and sends $\mathcal Z(J)$ to a point, see, e.g., \cite[Prop.\,2.1]{Br} for the proof of a similar result. On the other hand,
$A_J\subset BV(I)$ and so by Theorem \ref{te1.5}, ${\rm dim}\,\mathfrak M(A_J)\le 2$ and $H^2(\mathfrak M(A_J),\Z)=0$.

According to \cite[Thm.\,1.3]{Su}, to prove that the stable rank of $A$ is $1$ it suffices to show that the relative \v{C}ech cohomology groups $H^2(\mathfrak M(A),\mathcal Z(J),\Z)=0$ for all ideals $J\subset A$.
However, due to the {\em strong excision property} for cohomology, see, e.g., \cite[Ch.\,6,\,Thm.\,5]{Sp}, the pullback map $Q_J^*$ induces an isomorphism of the \v{C}ech cohomology groups $H^2(\mathfrak M(A_J),\Z)\cong H^2(\mathfrak M(A),\mathcal Z(J),\,\Z)$.  In particular, $H^2(\mathfrak M(A),\mathcal Z(J),\,\Z)=0$, as required.
\end{proof}
There are some applications of Theorems \ref{te1.1} and \ref{te1.2} to operator-valued $BV(I)$ functions and to interpolating problems for $BV(I)$ maps into some complex manifolds  analogous to those of \cite[Sec.\,1.2]{Br1}  and \cite[Thms.\,1.4,\,1.6]{Br2}.  These results will be published elsewhere.

 \end{document}